\author{J. Vandehey}
\thanks{Email: \href{mailto:vandehey@uga.edu}{\nolinkurl{vandehey@uga.edu}}}
\title[Normality of certain digit expansions]{On the joint normality of certain digit expansions}
\date{\today}
\newtheorem{thm}{Theorem}[section]
\newtheorem{q}[thm]{Question}
\newtheorem{lem}[thm]{Lemma}
\begin{document}

\maketitle

\begin{abstract}
We prove that a point $x$ is normal with respect to an ergodic, number-theoretic transformation $T$ if and only if $x$ is normal with respect to $T^n$ for any $n\ge 1$. This corrects an erroneous proof of Schweiger. Then, using some insights from Schweiger's original proof, we extend these results, showing for example that a number is normal with respect to the regular continued fraction expansion if and only if it is normal with respect to the odd continued fraction expansion.
\end{abstract}

\section{Introduction}\label{section:introduction}

A number $x$ is said to be normal to base $b$ if each digit string occurs with the same relative frequency as every other string of the same length: for example, in a base $10$ normal number, we would expect the digit string $(101)$ to occur as often as $(974)$.  Equivalently, a number $x$ is base $b$ normal if the sequence numbers $\{b^n x\}_{n=1}^\infty$ is equidistributed modulo $1$.

We know very little about which numbers are normal.  In particular, we do not know of any naturally occuring number---such as $\pi$, $e$, or $\sqrt{2}$---which is normal to any base, although we have constructed numbers that are normal---such as Champernowne's constant or the Copeland-Erd\H{o}s constant.  

A more tractable problem is to understand how the set of numbers normal to one base relates to the set of numbers normal to a different base.  For base $b$ normality, one of the strongest theorems is due to Schmidt \cite{schmidt}:

\begin{thm}\label{thm:schmidt}
Say that $r\sim s$ for two integers $r,s\ge 2$, if there exist positive integers $n$, $m$ such that $r^n =s^m$.  Otherwise, we say $r\not\sim s$.

If $r\sim s$, then all numbers that are base-$r$ normal are base-$s$ normal, and vice-versa.

If $r\not\sim s$, then the set of numbers that are normal base $r$ but not even simply normal\footnote{A number is simply normal if all one-digit strings occur with the same relative frequency.} to base $s$ has the cardinality of the continuum.
\end{thm}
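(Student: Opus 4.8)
I would handle the two halves of the theorem by quite different methods, the bulk of the work lying in the second. \emph{First half ($r\sim s$).} Fix positive integers $n,m$ with $r^n=s^m=:b$; it then suffices to prove the single assertion that, for any integer $a\ge 2$ and any $\ell\ge 1$, a number is normal base $a$ if and only if it is normal base $a^\ell$, since applying this with $(a,\ell)=(r,n)$ and with $(a,\ell)=(s,m)$ identifies ``normal base $r$'' and ``normal base $s$'' with ``normal base $b$.'' One implication is soft: if $x$ is normal base $a^\ell$ then $(a^{\ell k}x)_{k\ge1}$ is equidistributed mod $1$, hence so is $(a^{\ell k+j}x)_k=(a^j\cdot a^{\ell k}x)_k$ for each fixed $j$, as multiplication by the nonzero integer $a^j$ preserves equidistribution; interleaving the $\ell$ subsequences $j=0,1,\dots,\ell-1$ (a finite interleaving of equidistributed sequences is equidistributed) shows $(a^Nx)_{N\ge1}$ is equidistributed, i.e.\ $x$ is normal base $a$. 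The reverse implication---grouping base-$a$ digits into blocks of length $\ell$ preserves normality---is the delicate one: base-$a$ normality gives the frequencies of words seen by a sliding window, while base-$a^\ell$ normality demands those frequencies along a single residue class of starting positions modulo $\ell$. I would reduce it to showing that base-$a$ normality of $x$ already forces every base-$a$ word $w$ of length $L$ to occur with frequency $a^{-L}$ along each residue class modulo $\ell$ separately, and prove this by a counting argument over sufficiently long words (Maxfield's theorem); I regard this, not the soft manipulation above, as the real content of the first half---fittingly, it is an instance of the $T\mapsto T^n$ phenomenon revisited in this paper.

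\emph{Second half ($r\not\sim s$).} I would exhibit a Cantor set of real numbers, each normal base $r$ yet with the digit $0$ occurring in its base-$s$ expansion with upper frequency at least $1/s+\delta$ for a fixed $\delta>0$, so that no such number is even simply normal base $s$; a Cantor set has the cardinality of the continuum. First translate the hypothesis: $r\not\sim s$ is exactly the statement that $\theta:=\log r/\log s$ is irrational, so the base-$r$ and base-$s$ ``digit grids'' are incommensurable; quantitatively, $\{\,r^j s^{-k}:j,k\ge 0\,\}$ is dense in the positive reals. Each point $x$ of the set is the intersection of a nested sequence of base-$r$ cylinders, built in alternating stages. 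A \textbf{normality stage} appends a long finite base-$r$ word that is $\varepsilon_k$-normal---all short words occur with nearly uniform frequency, $\varepsilon_k\to 0$---leaving several admissible choices in order to generate continuum-many branches. A \textbf{sabotage stage} passes to a much smaller base-$r$ subcylinder chosen so that a long block of base-$s$ digits of $x$, beyond those already determined, is forced to be $0$. Then along the subsequence of base-$s$ prefix lengths ending at a sabotage stage the frequency of $0$ exceeds $1/s$ by a fixed amount, destroying simple normality base $s$; while, provided the normality stages dominate in total length and $\varepsilon_k\to 0$ quickly, the base-$r$ expansion is still normal base $r$.

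The main obstacle, and the point at which $r\not\sim s$ is indispensable, is the conflict between the two kinds of stage. A sabotage stage that is to register in the base-$s$ digit statistics must occupy a fixed positive proportion of the base-$s$ expansion produced so far---hence, converting by the factor $\theta$, a positive proportion of the base-$r$ expansion as well---so it cannot be neutralized merely by lengthening the surrounding normality stages, which would equally hide it in base $s$; consequently the base-$r$ digits that a sabotage subcylinder newly determines must themselves be nearly normal base $r$. A single sabotage subcylinder fails this: forcing base-$s$ digits $M{+}1,\dots,M{+}L$ to vanish pins $x$ to an interval $[\,j_0 s^{-M},\,j_0 s^{-M}+s^{-M-L}\,)$, and the base-$r$ expansion of the rational $j_0 s^{-M}$ over the relevant window may be far from normal. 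The escape, which is exactly where irrationality of $\theta$ enters, is twofold. First, the incommensurability of the two grids guarantees that inside any given base-$r$ cylinder there are, at the scale we need, many admissible pairs $(M,j_0)$ forcing a base-$s$ zero block of the prescribed length. Second, one chooses these targets across the successive sabotage stages in an equidistributed fashion, so that, although each individual sabotage block is bad, their concatenation is $\varepsilon_k$-normal base $r$, since the base-$r$ digit in a fixed window position of $j_0 s^{-M}$, as $(M,j_0)$ varies appropriately, is nearly uniformly distributed. When $r\sim s$ this escape is unavailable, consistently with the first half: a base-$r$ cylinder is then a union of base-$s$ cylinders, so a forced run of base-$s$ zeros is a forced run of zeros base $b=r^n$, hence base $r$, which wrecks base-$r$ normality. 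With the construction arranged, the remaining verifications---that a concatenation of $\varepsilon_k$-normal words with $\varepsilon_k\to 0$ and rapidly increasing lengths is normal, and that the sabotage blocks contribute negligibly to the base-$r$ word counts while producing the base-$s$ excess along the chosen prefixes---are bookkeeping.
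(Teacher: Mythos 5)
The paper does not actually prove Theorem~\ref{thm:schmidt}: it is Schmidt's theorem, quoted from \cite{schmidt}, with only the remarks that the $r\sim s$ half is elementary (group base-$2$ digits in pairs to read off base-$4$ digits) and that the $r\not\sim s$ half is intricate (see \cite{quef}). So you are reconstructing Schmidt's proof rather than the paper's. Your first half is sound: the reduction to ``normal base $a$ iff normal base $a^\ell$'' is correct, the direction from $a^\ell$ to $a$ via Weyl's criterion and interleaving is complete, and the direction from $a$ to $a^\ell$ that you defer to a counting argument is precisely the $T$-versus-$T^k$ statement the paper proves in Section~\ref{sec:fixedproof}: the frequency of a length-$n\ell$ base-$a$ word counted along positions divisible by $\ell$ is at most $\ell$ times its frequency along all positions, namely $\ell\cdot a^{-n\ell}$, and the Pyatetski\u\i-Shapiro criterion finishes it. You could simply invoke that instead of Maxfield-style bookkeeping.

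The genuine gap is in the second half, and it sits exactly where you try to resolve the tension you yourself identify. You correctly argue that a sabotage block must occupy a fixed positive proportion of the expansion generated so far (else it leaves no trace in the base-$s$ digit frequencies), and you correctly conclude that the base-$r$ digits it forces cannot be diluted by lengthening the normality stages. But your escape --- ``although each individual sabotage block is bad, their concatenation is $\varepsilon_k$-normal because the targets $(M,j_0)$ are equidistributed across stages'' --- contradicts that conclusion. Since the $k$th block has length at least a fixed fraction $c$ of the total length of the prefix ending with it, the word-frequency vector of that prefix assigns weight at least $c/(1+c)$ to that single block; base-$r$ normality requires convergence along \emph{all} prefixes, including these, so no averaging over other blocks can compensate for a block that is individually far from normal. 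You therefore need each sabotage block to be individually $\varepsilon_k$-normal base $r$, i.e., you must show that among the roughly $s^{M}r^{-P}$ admissible values of $j_0$ there is one for which the base-$r$ digits of $j_0 s^{-M}$ across the forced window form a nearly normal word. That claim is the analytic heart of Schmidt's theorem; it is where multiplicative independence of $r$ and $s$ genuinely enters, through exponential-sum and correlation estimates for $j_0 r^{k}s^{-M}$ modulo $1$, and not through the density of $\{r^j s^{-k}\}$ or the abundance of admissible pairs $(M,j_0)$ inside a base-$r$ cylinder --- the latter holds just as well when $r\sim s$. As written, the construction does not close.
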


The case $r\sim s$ of Theorem \ref{thm:schmidt} is fairly elementary. If we compare, say, the base $2$ and base $4$ expansions of some number $x$, then the first two digits after the decimal point in the base $2$ expansion completely determine the first digit after the decimal point in the base $4$ expansion, and each following pair of base $2$ digits completely determines the next base $4$ digit, and vice-versa.

However, the case $r \not\sim s$ is quite intricate.  Queff\`{e}lec \cite{quef} provides a survey of various proofs. (Moran and Pollington \cite{MP} have given a slight generalization of Theorem \ref{thm:schmidt} to the case of real, not just integer, basees.)

We wish to consider much more general types of expansions than base-$b$ expansions in this paper.

Consider a bounded space $\Omega\subset \mathbb{R}^m$ with a $\sigma$-algebra $\Sigma$ and a transformation $T: \Omega \to \Omega$ called a number-theoretic transformation. For the purposes of this paper, a transformation is said to be a number-theoretic transformation if the following conditions hold:
\begin{enumerate}
\item We have $T^{-1}\Sigma \subseteq \Sigma$.
\item There exists a set $\mathcal{D}\subset \mathbb{N}$, known as the set of digits, and a partition of $\Omega$ into disjoint sets, $\{I_d\}_{d\in \mathcal{D}}$, such that $\bigcup_{n\in \mathcal{D}} I_n= \Omega$.
\item The restriction of $T$ to $I_d$, denoted by $T_d:=T|_{I_d}$, is continuous and injective.
\item For any finite string of digits $s=[d_1, d_2, \dots, d_l]$, we denote the cylinder set corresponding to this string by
\[
C_s= C[d_1, d_2, \dots, d_l]= T_{d_1}^{-1}T_{d_{2}}^{-1}\dots T_{d_l}^{-1} \Omega \in \Sigma.
\]
We say that a string $s$ is admissible if $C_s$ is non-empty. For all points $x\in \Omega$, there exists a unique infinite string of digits $[\mathfrak{d}_1,\mathfrak{d}_2, \dots]$ such that $x$ is the only point contained in all of the nested sequence of cylinder sets,
\[
C[\mathfrak{d}_1] \supset C[\mathfrak{d}_1, \mathfrak{d}_2] \supset \dots \supset  C[\mathfrak{d}_1, \mathfrak{d}_2, \dots, \mathfrak{d}_n] \supset.
\]
We will abuse notation slightly and often write $x=[\mathfrak{d}_1,\mathfrak{d}_2, \dots]$ and call this the $T$-expansion of $x$. Note that $T$ is a forward shift on 

(For clarification, $d_n$ refers to the $n$th digit of a string $s$ and $\mathfrak{d}_n$ refers to the $n$th digit of a point $x\in \Omega$.)

\item There exists a unique probability measure $\mu:\Sigma \to [0,1]$, which is equivalent to Lebesgue measure, for which $T$ is invariant; that is, $\mu(T^{-1}A)=\mu(A)$ for all $A\subset \Omega$. Moreover, $\mu(\Omega)=1$.

\item $T$ is ergodic; that is, for any  $A\in \Sigma$, if $T^{-1}A$ equals $A$ up to a set of $\mu$-measure zero, then either $\mu(A)=1$ or $\mu(A)=0$.
\end{enumerate}

Given a space $\Omega$ with a number-theoretic transformation $T$, we say that $x\in \Omega$ is $T$-normal if for all admissible strings $s$, we have
\[
\lim_{N\to \infty} \frac{\# \left\{ 0 \le n < N   \mid  T^n x \in C_s  \right\} }{N}= \mu(C_s).
\]
By the pointwise ergodic theorem, almost all points $x\in \Omega$ are $T$-normal. Given two number-theoretic transformations $T$ and $S$ on the same space $\Omega$, we say that $T$ and $S$ are normal-equivalent if a point $x$ is $T$-normal if and only if it is $S$-normal. 

In \cite{schweiger}, Schweiger investigated when $T$ and $S$ are normal-equivalent.  He claimed to show the following result, but one direction of his proof is unfortunately in error, as we will describe in Section \ref{sec:error}:
\begin{thm}\label{thm:schweiger}
If there exist positive integers $n$ and $m$ such that $T^n = S^m$, then $T$ and $S$ are normal-equivalent.
\end{thm}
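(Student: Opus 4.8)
The plan is to reduce Theorem~\ref{thm:schweiger} to the special case $S=T^n$. If $T^n=S^m=:R$, then $R$ is again a number-theoretic transformation, and its invariant measure — being $R$-invariant, equivalent to Lebesgue measure, and hence, by ergodicity of $R$, the unique such measure — coincides with $\mu_T$ and with $\mu_S$; so it suffices to prove that every number-theoretic transformation $T$ is normal-equivalent to $T^n$, applied once to $(T,n)$ and once to $(S,m)$. Write $\mu=\mu_T$ for the common measure. Note that the $T^n$-cylinders are exactly the $T$-cylinders whose string has length divisible by $n$; approximating an arbitrary cylinder and its complement from inside by finite unions of $T^n$-cylinders (using that $\mu$ is a probability measure, so tails are negligible) shows that $x$ is $T^n$-normal if and only if
\[
\lim_{N\to\infty}\frac{1}{N}\#\{0\le j<N:T^{nj}x\in C\}=\mu(C)\qquad\text{for every }T\text{-cylinder }C.
\]
The same approximation argument shows that, once this holds for every $T$-cylinder $C$, it holds for every set $A$ such that both $A$ and $\Omega\setminus A$ are countable unions of cylinders; in particular it holds for $A=T^{-i}C$, since the $T$-preimage of a cylinder is a countable disjoint union of cylinders and the complement of a cylinder is a finite union of cylinders.

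For the direction ``$T^n$-normal $\Rightarrow$ $T$-normal'' — the elementary one — fix a $T$-cylinder $C$ and split a Birkhoff average for $T$ by residues modulo $n$:
\[
\frac{1}{M}\#\{0\le k<M:T^{k}x\in C\}=\sum_{i=0}^{n-1}\frac{1}{M}\#\{j\ge 0:nj+i<M,\ T^{nj}x\in T^{-i}C\}.
\]
By the extended form of $T^n$-normality from the previous paragraph, each summand tends to $\tfrac1n\mu(T^{-i}C)=\tfrac1n\mu(C)$, so the whole average tends to $\mu(C)$; hence $x$ is $T$-normal.

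The direction ``$T$-normal $\Rightarrow$ $T^n$-normal'' is the delicate one; it genuinely requires ergodicity of $T^n$ (not merely of $T$), and I expect this to be exactly where Schweiger's argument breaks down. Assume $x$ is $T$-normal and consider the empirical measures $\nu_N=\frac1N\sum_{j<N}\delta_{T^{nj}x}$ on the bounded set $\Omega\subset\mathbb{R}^m$; by tightness they have weak-$*$ limit points. Let $\nu=\lim_k\nu_{N_k}$ be one. Since $\nu_N$ and $(T^n)_*\nu_N$ differ by at most $2/N$ in total variation, $\nu$ is $T^n$-invariant. Moreover $\frac1{nN}\sum_{k<nN}\delta_{T^{k}x}=\frac1n\sum_{i=0}^{n-1}(T^i)_*\nu_N$, and the left side converges to $\mu$ by $T$-normality, so passing to the limit along $N_k$ gives $\mu=\frac1n\sum_{i=0}^{n-1}(T^i)_*\nu$. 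Each measure $(T^i)_*\nu$ is a $T^n$-invariant probability measure, because $(T^n)_*(T^i)_*\nu=(T^i)_*(T^n)_*\nu=(T^i)_*\nu$. But $\mu$ is ergodic for $T^n$, hence an extreme point of the convex set of $T^n$-invariant probability measures; writing $\mu$ as an average of $n$ such measures therefore forces $(T^i)_*\nu=\mu$ for every $i$, and in particular $\nu=\mu$. Thus every weak-$*$ limit point of $\{\nu_N\}$ equals $\mu$, so $\nu_N\to\mu$, and evaluating on cylinders yields $T^n$-normality of $x$.

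The main obstacle is this last direction, and within it the bookkeeping that makes the weak-$*$ argument rigorous: one must know that cylinders, their complements, and their preimages $T^{-i}C$ are continuity sets for each limit measure $\nu$, both so that weak-$*$ convergence can be read off on them and so that ``$\nu$ is $T^n$-invariant on cylinders'' upgrades to genuine $T^n$-invariance; likewise one must justify $(T^i)_*\nu_{N_k}\to (T^i)_*\nu$ even though $T$ need not be globally continuous. These points are handled by the standard fact that for number-theoretic transformations the boundaries of cylinders are Lebesgue-null, together with a short argument that the orbit-average limits $\nu$ do not charge such null sets. The rest — the reduction to $S=T^n$, the residue-class decomposition, and the identification $\mu_{T^n}=\mu_T=\mu_S$ — is routine.
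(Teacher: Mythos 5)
Your overall architecture matches the paper's: reduce to showing $T$-normality is equivalent to $T^n$-normality (the identification $\mu_T=\mu_{T^n}=\mu_S$ via ergodicity of $R=T^n=S^m$ is the same remark the paper makes), and your easy direction via the residue-class decomposition of the Birkhoff average is a reasonable proof of the half the paper does not reprove. But for the hard direction --- $T$-normal implies $T^n$-normal, the one Schweiger got wrong --- you take a genuinely different and much heavier route. The paper's entire argument is the counting inequality
\[
\#\{0\le j<N : (T^n)^j x\in C_s\}\ \le\ \#\{0\le i<nN : T^i x\in C_{s'}\},
\]
where $C_s=C_{s'}$ identifies a $T^n$-cylinder with a $T$-cylinder of length divisible by $n$; dividing by $N$ and using $T$-normality bounds the $\limsup$ of the $T^n$-orbit frequencies by $n\cdot\mu(C_s)$, and the Pyatetski\u\i-Shapiro criterion (set up in the introduction for exactly this purpose, with constant $C=n$) finishes in two lines. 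Your route --- weak-$*$ limit points of empirical measures, the identity $\mu=\frac1n\sum_{i}(T^i)_*\nu$, and extremality of the $T^n$-ergodic measure $\mu$ --- is the classical generic-points argument; both routes need the nontrivial fact, flagged in the paper, that $\mu$ stays ergodic for $T^n$. What the paper's route buys is that it is purely combinatorial and uses nothing about $\Omega$ beyond $C_s=C_{s'}$.

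The caveat is that your route imports topological regularity that is not among the paper's axioms for a number-theoretic transformation, and the deferral to ``a short argument'' hides a circularity. To push weak-$*$ limits through the discontinuous maps $T^i$ and $T^n$, and to read off convergence on cylinders, you need the limit measure $\nu$ not to charge cylinder boundaries; the natural way to see this is the domination $\nu\le n\mu$, but that only follows \emph{after} you have $\mu=\frac1n\sum_i(T^i)_*\nu$, which is the very identity whose derivation required pushing limits through $T^i$. This is repairable (pass to a diagonal subsequence along which each $(T^i)_*\nu_{N_k}$ converges to some $\nu^{(i)}$, obtain $\mu=\frac1n\sum_i\nu^{(i)}$ without identifying $\nu^{(i)}$ with $(T^i)_*\nu^{(0)}$, deduce $\nu^{(0)}\le n\mu$, and only then upgrade to $T^n$-invariance and to $\nu^{(i)}=(T^i)_*\nu^{(0)}$), but even then you must assume cylinder boundaries are Lebesgue-null, which the paper never requires. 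Since the Pyatetski\u\i-Shapiro criterion is already available, your hard direction can and should collapse to the displayed inequality.
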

Since the transformation $Tx = rx \pmod{1}$ is a number-theoretic transformation on $\Omega=[0,1)$, this can be seen as a natural generalization of one direction of Theorem \ref{thm:schmidt}. The fixed proof is in Section \ref{sec:fixedproof} and is in fact quite short.

We quickly remark that if $\mu$ is an ergodic measure corresponding to $T$ and is left invariant by $T$, then $\mu$ is also an ergodic measure corresponding to $T^k$ and is invariant under $T^k$ as well. The ergodicity here is not trivial or self-evident, see \cite{schweiger}.

Schweiger further conjectured that the converse of Theorem \ref{thm:schweiger} was also true---that is, $T$ and $S$ are normal-equivalent if and only if you can write $T^n = S^m$.  Kraaikamp and Nakada \cite{KN1} provide two simple counterexamples to this, utilizing the regular continued fraction, backwards continued fraction, and nearest integer continued fraction.

Although Schweiger's proof is incorrect, it contains an interesting idea that can be used to provide more results not covered by Theorem \ref{thm:schweiger}. In particular, we will investigate what we call augmented transformations $\widetilde{T}$ corresponding to a number-theoretic transformation $T$. The augmented transformation $\widetilde{T}$ encodes extra information about the way the transformation $T$ acts.  (For more details, see section \ref{sec:augment}.) If we construct $\widetilde{T}$ in the right way, then we can compare $\widetilde{T}$ and a different number-theoretic transformation $S$ to show that all $T$-normal numbers are $S$-normal as well.

We will in particular be able to show the following result.
\begin{thm}\label{thm:main}
Let $T_{RCF}$ and $T_{OCF}$ refer to the forward shift on the regular continued fraction digits and odd continued fraction digits, respectively.  Then $T_{RCF}$ and $T_{OCF}$ are normal-equivalent.  
\end{thm}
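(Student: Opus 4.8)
\emph{Proof sketch.} The plan is to compare both $T_{RCF}$ and $T_{OCF}$ with a single, finer ``augmented'' transformation $\widetilde{T}$ that sees both expansions simultaneously. Recall that the odd continued fraction of a point is produced from its regular continued fraction by a classical local rewriting procedure built out of singularizations and insertions, i.e.\ out of the $\mathrm{GL}_2(\mathbb{Z})$-identities that replace a block $a+\cfrac{1}{1+\cfrac{1}{b+\xi}}$ by $(a+1)+\cfrac{-1}{(b+1)+\xi}$ and their inverses, applied at positions dictated by a bounded amount of local data (the parity of a partial quotient, the sign currently being carried, and whether an adjustment cascade is in progress). The natural choice of $\widetilde{T}$ is therefore an ``elementary-move'' refinement of the continued fraction algorithm: its phase space is $[0,1)\times Y$ for a finite state set $Y$, and one application of $\widetilde{T}$ performs a single elementary step of the conversion, so that both $T_{RCF}$ and $T_{OCF}$ are recovered by suitably grouping consecutive elementary steps. (One could also run the same scheme twice, building an augmented RCF to show every $T_{RCF}$-normal point is $T_{OCF}$-normal and an augmented OCF to show the converse; using one common refinement is merely tidier.)

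I would first check that $\widetilde{T}$ is itself a number-theoretic transformation: the digit set and cylinder partition are inherited from $T_{RCF}$ refined by the finite coordinate $Y$, the invariant measure is the evident lift of the Gauss measure (still a probability measure, as $Y$ is finite), and ergodicity of $\widetilde{T}$ follows from ergodicity of $T_{RCF}$ together with the finite-state nature of the extension. Next I would invoke the comparison machinery for augmented transformations from Section~\ref{sec:augment} to conclude that $\widetilde{T}$ and $T_{RCF}$ are normal-equivalent: passing from the $T_{RCF}$-expansion of $x$ to its $\widetilde{T}$-expansion only records data about $x$ that is, step by step, determined by finitely many already-seen RCF digits, so that frequencies of visits to $\widetilde{T}$-cylinders are computed from frequencies of visits to $T_{RCF}$-cylinders and conversely.

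The heart of the matter is to realize $T_{OCF}$ as a jump (induced) transformation of $\widetilde{T}$: I would exhibit a cylinder-measurable set $B\subseteq\widetilde{\Omega}$ such that the first-return map $\widetilde{T}_{B}$ is measurably conjugate to $T_{OCF}$ via a conjugacy fixing the $[0,1)$-coordinate, with return-time function constant on $\widetilde{T}$-cylinders and in $L^{1}(\widetilde{\mu})$ --- integrability being clear since singularizations delete only a controlled proportion of digits and the relevant adjustment cascades have exponentially decaying measure. The principle that a point is normal for a number-theoretic transformation if and only if it is normal for an induced transformation with integrable return time --- exactly the sort of comparison the augmented-transformation framework of Section~\ref{sec:augment} is built to supply --- then gives that $\widetilde{T}$ and $T_{OCF}$ are normal-equivalent, and composing with the previous paragraph yields: $x$ is $T_{RCF}$-normal $\iff$ $x$ is $\widetilde{T}$-normal $\iff$ $x$ is $T_{OCF}$-normal, which is Theorem~\ref{thm:main}.

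The step I expect to be the main obstacle is pinning down the finite state set $Y$ so that $\widetilde{T}$ is simultaneously a genuine number-theoretic transformation --- in particular so that each point has a unique $\widetilde{T}$-expansion and the branches $\widetilde{T}|_{I_d}$ are injective --- \emph{and} fine enough that the OCF-defining region $B$ is an honest union of $\widetilde{T}$-cylinders carrying a cylinder-constant, integrable return time; meeting both demands at once is what dictates the correct choice of $Y$. A secondary annoyance is the $\mu$-null set of exceptional points (those with finite or ambiguous OCF expansion, or whose cascades fail to terminate), which must be verified to be negligible so as not to disturb any of the normality statements.
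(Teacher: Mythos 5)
Your overall strategy---mediating between the two expansions through a finite-state augmentation of the continued fraction algorithm---is in the right spirit and resembles the paper's, which augments $T_{RCF}$ by a two-element state recording whether the RCF-to-OCF conversion performed an insertion, singularization, or deletion at the previous digit. But the load-bearing step of your argument is a ``principle that a point is normal for a number-theoretic transformation if and only if it is normal for an induced transformation with integrable return time,'' which you attribute to the machinery of Section~\ref{sec:augment}. That machinery (Theorem~\ref{thm:staggered}) proves something strictly weaker: it transfers normality between $T$ and a skew product $\widetilde{T}(x;a)=(Tx;f_x(a))$ over a \emph{finite} fiber, where one step of $\widetilde{T}$ is exactly one step of $T$ on the base. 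It says nothing about induced (first-return) maps, where a single step of the induced map is a variable number of steps of the ambient map. Transferring normality of an \emph{individual} point through an inducing construction is genuinely nontrivial: a cylinder of the induced map unwinds into a countable union of cylinders of the ambient map (one for each return-time pattern), and normality of $x$ only controls frequencies of visits to finite unions; one needs a uniform tail estimate to pass to the countable union. This is precisely what the paper's trigger-string argument, the error sets $\mathcal{I}_k$, Lemma~\ref{lem:mn} (the asymptotics of the reindexing function $m(n)$), the Pyatetski\u\i-Shapiro criterion, and Lemma~\ref{lem:subset} are for. As written, your proof black-boxes all of that content into an unproved assertion.

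There is also a structural obstruction to the specific inducing picture you propose. Because the conversion uses \emph{insertions} as well as singularizations, the OCF expansion can be locally \emph{longer} than the RCF expansion (the reindexing satisfies $m(n)=n-\#\{\text{singularizations}\}+\#\{\text{insertions}\}+O(1)$, which can exceed $n$), so $T_{OCF}$ cannot be realized as a first-return map of $T_{RCF}$ or of any same-speed finite extension of it. Your fix---a common refinement $\widetilde{T}$ performing one ``elementary move'' per step, of which both $T_{RCF}$ and $T_{OCF}$ are induced maps---is coherent, but then the comparison of $\widetilde{T}$ with $T_{RCF}$ is itself an inducing comparison rather than a skew-product comparison, so Theorem~\ref{thm:staggered} does not apply to it either, and both halves of your argument now rest on the missing principle. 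To repair the proof you would need either to prove that principle (with the tail estimates made quantitative for the specific point $x$, not merely almost everywhere) or to follow the paper's route: verify the augmentation is staggeringly good, establish $m(n)=cn(1+o(1))$ for RCF-normal $x$, and count occurrences of each OCF string via its trigger strings, controlling the long-string contributions by cylinders of the form $[1^{k}]$ whose measure tends to zero.
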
 

This result is closer related to Theorem \ref{thm:schweiger} than it first appears. In particular, there exists a set of RCF cylinders $\{C_s\}$ which cover almost all points in $\Omega=[0,1)$, and functions $n(x)$ and $m(x)$ which are constant on each cylinder in this set, such that $T_{RCF}^{n(x)} = T_{OCF}^{m(x)}$. In other words, just as how certain blocks of digits in base $2$ correspond to individual certain (one-digit) blocks in base $4$, there are certain blocks of RCF and OCF digits that are equivalent to one another. Thus, the following question is quite natural to ask:
\begin{q}
Suppose $T$ and $S$ are number-theoretic transformations on the same space such that there exist a set of cylinders completely partitioning $\Omega$ and functions $n(x)$ and $m(x)$, constant on each of these cylinders, such that $T^{n(x)}=S^{m(x)}$.  Must $T$ and $S$ be normal-equivalent?  Or, if $T$ and $S$ are normal-equivalent, must there exist a corresponding partition into cylinder sets and functions $n(x)$ and $m(x)$?
\end{q}

At the end of the paper we shall briefly outline how $T_{RCF}$-normal numbers must also be $T_{ECF}$-normal, where $T_{ECF}$ is the forward shift on the even continued fraction digits.

In proving Theorems \ref{thm:schweiger} and \ref{thm:main}, we will make use of two techniques that are perhaps not as well known as they should be. The first is the Pyatetski\u\i-Shapiro normality criterion (see \cite{postnikov,PS} for some of the original formulations and \cite{BM,MS,shkredov} for some extensions and improvements).

\begin{thm}[Pyatetski\u\i-Shapiro normality criterion]
Let $T$ be a measure-theoretic transformation on $\Omega$ with ergodic, $T$-invariant measure $\mu$. Let $x\in \Omega$ be a fixed point. If there exists a constant $C\ge 1$ such that for all admissible strings $s$ we have
\[
\limsup_{N\to \infty} \frac{\# \left\{  0 \le n < N \mid T^n x \in C_s   \right\} }{N} \le C \cdot \mu(C_s),
\]
then $x$ is $T$-normal.
\end{thm}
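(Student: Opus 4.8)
The plan is to pass to the symbolic coding and to read $T$-normality of $x$ as convergence of empirical measures. By axiom (4) the digit map $y\mapsto(\mathfrak{d}_{1}(y),\mathfrak{d}_{2}(y),\dots)$ is injective and conjugates $T$ to the one-sided shift on $\mathcal{D}^{\mathbb{N}}$, so that $T^{n}x$ lies in $C_{s}$ exactly when the digit string of $x$, shifted by $n$, begins with $s$. Setting $\nu_{N}:=\frac1N\sum_{n=0}^{N-1}\delta_{T^{n}x}$, the quantity $\#\{0\le n<N: T^{n}x\in C_{s}\}/N$ is just $\nu_{N}(C_{s})$, and $x$ is $T$-normal precisely when $\nu_{N}(C_{s})\to\mu(C_{s})$ for every admissible $s$. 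I would work in the compact metrizable space $(\mathcal{D}\cup\{\infty\})^{\mathbb{N}}$, in which each $C_{s}$ with entries in $\mathcal{D}$ is clopen; there $\{\nu_{N}\}$ has weak-$*$ limit points, and it suffices to prove that every such limit point is $\mu$.

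So let $\nu$ be a weak-$*$ limit point of $(\nu_{N})$. It is $T$-invariant by the standard estimate $\|\nu_{N}-T_{*}\nu_{N}\|\le 2/N\to0$. Since cylinders are clopen and the hypothesis bounds the relevant $\limsup$s, we get $\nu(C_{s})\le C\,\mu(C_{s})$ for every admissible $s$ (and $\nu(C_{s})=0$ otherwise). Next I would upgrade this to $\nu(A)\le C\,\mu(A)$ for all measurable $A$ --- cylinders generate, and every open set is a countable disjoint union of cylinders, so the bound propagates by outer regularity --- whence $\nu\ll\mu$ with $d\nu/d\mu\le C$. Then $\mu-\frac1C\nu$ is a nonnegative $T$-invariant measure of total mass $1-\frac1C$, so (for $C>1$) $\mu=\frac1C\nu+(1-\frac1C)\rho$ with $\rho$ a $T$-invariant probability measure; as $\mu$ is ergodic, hence an extreme point of the simplex of $T$-invariant probability measures, we conclude $\nu=\mu$ (the case $C=1$ being immediate). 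Thus every limit point is $\mu$, so $\nu_{N}\to\mu$ and $x$ is $T$-normal.

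The step carrying the real content --- and the one I expect to be the main obstacle --- is the upgrade from the cylinder bound to $\nu\ll\mu$, which conceals the question of whether the empirical measures lose mass in the weak-$*$ limit. When $\mathcal{D}$ is finite the symbolic space is already compact, $\nu$ is automatically a probability measure on admissible sequences, and the upgrade is routine. When $\mathcal{D}$ is infinite --- the case relevant to continued fractions --- one must rule out escape of mass: a priori $\nu$ could give positive mass to sequences of the compactification containing the symbol $\infty$, and then $\nu$ need not be absolutely continuous with respect to $\mu$. Controlling this is precisely where the hypothesis must be used for cylinders of \emph{every} length rather than merely length one: the bounds $\nu(C_{s})\le C\mu(C_{s})$ for long $s$, together with the invariance of $\nu$ and an analysis of the escaped part according to where large digits first and last appear, are what pin the escaped mass to $0$. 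Once tightness of $(\nu_{N})$ is in hand, the remainder of the proof is soft.
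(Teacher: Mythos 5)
The paper offers no proof of this statement---it is quoted from the literature ([postnikov], [PS], [BM], [MS], [shkredov])---so there is no in-paper argument to compare yours against; I can only judge the sketch on its merits. For a \emph{finite} digit set your argument is correct and is the standard one: empirical measures $\nu_N$, compactness of the symbolic space, shift-invariance of weak-$*$ limits, passage of the bound to the limit because cylinders are clopen, the upgrade to $\nu\le C\mu$, and extremality of the ergodic measure $\mu$. No objection to any of that.

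The gap is exactly at the step you yourself flag as the main obstacle, and it cannot be closed in the way you describe. You assert that the bounds $\nu(C_s)\le C\mu(C_s)$ over all finite admissible strings, together with shift-invariance of $\nu$ and ``an analysis of where large digits first and last appear,'' pin the escaped mass to $0$. They do not. Take the Gauss map and
\[
x=[w_1,H_1,w_2,H_2,w_3,H_3,\dots],
\]
where $(H_k)$ is any sequence of distinct integers tending to infinity and $(w_i)$ is chosen so that each digit $d$ occurs in $(w_i)$ with asymptotic frequency $\mu(C_{[d]})$. Then every cylinder of length at least $2$ is visited with frequency $0$ (one of its first two prescribed digits must equal some particular $H_k$, which happens only finitely often), and $C_{[d]}$ is visited with frequency $\tfrac12\mu(C_{[d]})$; so the hypothesis of the criterion holds for \emph{every} admissible string with $C=1$, yet $x$ is flagrantly non-normal. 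In the compactification, the (shift-invariant) limit measure $\nu$ satisfies all the cylinder bounds while giving zero mass to $\mathcal{D}^{\mathbb{N}}$ itself: $\nu$-almost every sequence has the symbol $\infty$ in one of its first two coordinates. Hence no argument deducible from the stated hypotheses can rule out escape of mass, and the criterion as written is false for infinite alphabets without an added tightness assumption (the classical versions are for finite alphabets; the continued-fraction versions in the literature carry extra conditions). It is worth observing that in the places this paper actually invokes the criterion the needed tightness is available for free, because the domination holds \emph{before} passing to the limit: for instance, in the proof that $T$-normality implies $T^k$-normality one has $\frac1N\sum_{n<N}\delta_{(T^k)^nx}\le k\cdot\frac1{kN}\sum_{n<kN}\delta_{T^nx}$ as measures, and the right-hand side converges to $k\mu$, so no mass can escape. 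A correct write-up should either build such a hypothesis into the criterion or verify tightness separately in each application.
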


The second technique is the following lemma, whose proof is immediate from the definition of normality and the pointwise ergodic theorem, and which does not appear to be used in the literature.
\begin{lem}\label{lem:subset}
Let $X\subset \Omega$, and suppose for all admissible strings $s$, the limit 
\[
\lim_{N\to \infty} \frac{\# \left\{   0 \le n < N \mid T^n x \in C_s \right\}}{N}, \qquad x\in X
\]
exists and is independent of $x$. If $X$ has positive $\mu$-measure, then $X$ must be a subset of the set of $T$-normal numbers.
\end{lem}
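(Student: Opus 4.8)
The plan is to use the pointwise ergodic theorem to identify the common value of the limit. For an admissible string $s$, write $c_s$ for the value of the displayed limit, which by hypothesis is the same for every $x\in X$. First I would note that since $\mathcal{D}\subseteq\mathbb{N}$ is countable, the set of admissible strings is countable. For each fixed admissible $s$, apply the pointwise ergodic theorem to the indicator $\mathbf{1}_{C_s}\in L^1(\mu)$ (valid since $\mu$ is a $T$-invariant ergodic probability measure): this gives
\[
\lim_{N\to\infty}\frac{\#\left\{0\le n<N \mid T^n x\in C_s\right\}}{N}=\mu(C_s)
\]
for $\mu$-almost every $x\in\Omega$. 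Intersecting these full-measure sets over the countably many admissible $s$, the set $\mathcal{N}$ of $T$-normal points satisfies $\mu(\mathcal{N})=1$.

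Since $\mu(X)>0$ and $\mu(\mathcal{N})=1$, we get $\mu(X\cap\mathcal{N})=\mu(X)>0$, so in particular $X\cap\mathcal{N}\neq\emptyset$. Fix any $x_0\in X\cap\mathcal{N}$. Then for every admissible $s$,
\[
c_s=\lim_{N\to\infty}\frac{\#\left\{0\le n<N \mid T^n x_0\in C_s\right\}}{N}=\mu(C_s),
\]
where the first equality uses $x_0\in X$ and the second uses that $x_0$ is $T$-normal. As $c_s$ does not depend on the chosen point of $X$, it follows that for every $x\in X$ and every admissible $s$ the limit equals $\mu(C_s)$; this is exactly the assertion that every point of $X$ is $T$-normal, i.e.\ $X\subseteq\mathcal{N}$.

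There is essentially no hard step here; the one point requiring a moment's care is the countability of the digit alphabet (hence of the set of admissible strings), which is what allows the passage from ``for each $s$, almost every $x$ equidistributes with respect to $C_s$'' to ``almost every $x$ equidistributes with respect to $C_s$ simultaneously for all $s$,'' and thus to $\mu(\mathcal{N})=1$. Note that the hypothesis that $\mu$ be equivalent to Lebesgue measure is not needed for this argument, nor is any structure on $X$ beyond measurability and positivity of its measure.
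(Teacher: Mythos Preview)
Your proof is correct and follows exactly the approach the paper indicates: the paper states that the lemma is immediate from the definition of normality together with the pointwise ergodic theorem, and your argument spells out precisely this, using ergodicity to get $\mu(\mathcal{N})=1$ and then intersecting with the positive-measure set $X$ to identify each $c_s$ with $\mu(C_s)$.
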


\section{On Theorem \ref{thm:schmidt}}
\subsection{The flaw in Schweiger's argument}\label{sec:error}

Schweiger reduces the problem to showing that $T$-normality is equivalent to $T^k$-normality. It is only the proof that $T$-normality implies $T^k$-normality that is in error.  For simplicity, Schweiger restricts himself to the case $k=2$ in his proof and then states that the method is generalizable.

Schweiger considers a string $s=[d_1, d_2, \dots, d_m]$ with $m$ even, so that $C_s$ is also a cylinder set for $T^2$.  In addition to the cylinder sets, he considers the specialized subsets
\[
E(i) = \{ x \in C_s \mid T^{m+i} x \in C_s\}.
\]
If $T^n x\in E(i)$, then the string $s$ occurs at two places in the $T$-expansion of $x$, starting at the $n+1$th place and at the $n+m+i$th place.

Schweiger goes on to claim without proof that for any $r \in \mathbb{N}$, we have
\[
\#\left\{0\le n < N \mid T^{2n}x \in C_s \right\} \ge \#\left\{0 \le n < N \mid T^n x\in \bigcup_{i=1}^r E(i) \right\}+ o(N).
\]
There are at least two typos in the statement of this inequality and one significant error.  

The first typo is that the range for $n$ on the right hand side should be $0 \le n < 2N$. 

The second typo is that $E(i)$ should be replaced with $E(2i-1)$ to only consider odd indexed sets.  If $i$ is even, and all the occurences of $s$ start at odd indices, then they could be counted by the right-hand side but not the left---a clear contradiction.

Thus it appears that Schweiger had wanted to write
\[
\#\{0\le n < N \mid T^{2n}x \in C_s\} \ge \#
\left\{0 \le n < 2N \mid T^n x\in \bigcup_{i=1}^r E(2i-1) \right\}+ o(N).
\]

However, even this altered statement is incorrect.  We will illustrate with the base-$3$ expansion and $k=2$.  Consider the number
\[
x=0.\overline{20101012012222222222}
\]
and the string $s=[0,1]$.  In this case, $T^{20n+1}(x) \in E(5)$, $T^{20n+3}(x) \in E(3)$, and $T^{20n+5}(x) \in E(1)$, so that the first $2N$ forward iterates of $x$ are in $\bigcup_{i=1,3,5} E(i)$ exactly $3N/10+O(1)$ times; however, by pairing up digits to emulate the base-$9$ expansion, we see
\[
x=0.\overline{(20)(10)(10)(12)(01)(22)(22)(22)(22)(22)},
\]
so that in the first $N$ forward iterates of $x$ (under $T^2$) contain are in $C_s$ exactly $N/10+O(1)$ times.  This is a counter-example to the inequality, and unfortunately this error does not appear to have a simple fix. The proof relies critically on it.

The idea of Schweiger's argument is that $T^n x\in E(2i-1)$ essentially ``reads'' the presence of the desired string in multiple places, and only one of these can also be read by the sped up transformation $T^2$; the problem is that the second appearance of the string $s$ can be read multiple times by different $E(i)$'s.

\subsection{The fixed proof}\label{sec:fixedproof}

Assume $x$ is $T$-normal. Let $s=[d_1, d_2, \dots, d_n]$ be an admissible string in digits of $T^k$. This naturally corresponds to a string $s'=[d'_1, d'_2, \dots, d'_{nk}]$ in digits of $T$, so that $C_s=C_{s'}$. Then 
\begin{align*}
\limsup_{N\to \infty} \frac{\#\{ 1\le n < N \mid (T^k)^n x \in C_s \}}{N} &\le \limsup_{N\to \infty}  \frac{\#\{ 1\le n < kN \mid T^n x \in C_{s'} \}}{N}\\
&= k \cdot \limsup_{N\to \infty} \frac{\#\{ 1\le n < kN \mid T^n x \in C_{s'} \}}{kN}\\
&= k \cdot \lim_{N\to \infty} \frac{\#\{ 1\le n < kN \mid T^n x \in C_{s'} \}}{kN}\\
&= k \cdot \mu(C_{s'}) = k \cdot \mu(C_s) .
\end{align*}
Thus, by the Pyatetski\u\i-Shapiro normality criterion, we have that $x$ is $T^k$-normal as well.

\section{The augmented system}\label{sec:augment}

We wish to extend a number-theoretic transformation $T$ to a transformation $\widetilde{T}$ on a larger domain $\widetilde{\Omega}$ in a way that will allow us to keep track of certain features of the $T$-expansion.

To create this augmented transformation, we will want a finite set $A$ of natural numbers and consider a new system $(\widetilde{\Omega}, \widetilde{\Sigma},\widetilde{T}, \widetilde{\mathcal{D}}, \widetilde{\mathcal{I}},\widetilde{\mu})$, such that
\begin{itemize}
\item we have $\widetilde{\Omega}=\Omega \times A$, $\widetilde{\Sigma}= \Sigma \times A$, $\widetilde{\mathcal{D}} = \mathcal{D}\times A$, and $\widetilde{\mathcal{I}}=\mathcal{I}\times A$;
\item for $(x;a)\in \widetilde{\Omega}$, with $x\in \Omega$, $a\in A$, we have $\widetilde{T}(x;a)=(Tx;f_x(a))$ for some bijective function $f_x: A \to A$;
\item we have 
\[
\widetilde{\mu}(I \times \{a\}) =\frac{1}{|A|} \mu(I)
\]
for any measurable set $I\subset \Omega$, and that $\widetilde{T}$ is $\widetilde{\mu}$-measure preserving.
\end{itemize}
Given $x\in\Omega$ and $a\in A$, we have $\widetilde{T}^{n-1}(x;a)=(T^{n-1} x; a_n)$ and will refer to $a_n$ as the \emph{augmented value} of the $n$th digit $\mathfrak{d}_n$. (Note that for different initial choices of $a$, we may have different augmented values for the same digit.) We say the augmented value of a string $s=[d_1, d_2, \dots , d_m]$ occuring at the $n$th place of $(x;a) \in \widetilde{\Omega}$ is equal to $a_n$.

Let us use the base-$2$ expansion as a straight-forward example. Here, the transformation $T$ is given by $2x\pmod{1}$.  Given $A=\{1,2,\dots,k\}$, we can augment the base-$2$ digit system with
\[
\widetilde{T}(x;a) = (Tx; a+1\bmod{k}).
\]
  For this system, the augmented value associated to the digits of $(x;1)$ is the mod $k$ value of the place of the digit---that is, $a_n$, the augmented value of $\mathfrak{d}_n$, is $n\pmod{k}$. In this case we see a very clear connection between when $\widetilde{T}(x;1)$ is in the set $\Omega \times \{1\}$ and the iterates $(T^k)^n x$

We are interested in particular augmented transformations, which we will call staggered transformations.  An augmented transformation is said to be staggered if there exists a string $s_{\operatorname{stag}}=[d_1, d_2, \dots , d_n]$ of digits in $\mathcal{D}$ such that for every $a$ and $a'$ there exists a $i<n$ such that
\[
\widetilde{T}^i \left(C_{s_{\operatorname{stag}}} \times \{a'\} \right) \subset \Omega \times \{a\} .
\]
Any string with this property is called a staggered string.\footnote{In an earlier draft of this paper, the staggered strings more closely resembled the sets $E(i)$ from Schweiger's proof, although the author subsequently realized that a simpler definition could be used. This is why it was stated in the abstract that ideas from Schweiger's proof were used to extend the result.}

The example augmented base-$2$ expansion given above is a staggered system. Any string of length $k$ is already a staggered string.

As a quick side note, it is known that, for any admissible string $s=[d_1,d_2,\dots,d_n]$, there exists (by the Radon-Nikodym theorem and the non-singularity of $T$) a $\Sigma$-measurable function $\omega_s:\Omega\to \mathbb{R}$ that satisfies
\[
\lambda(  T|^{-n}_{C_s} E) = \int_E \omega_s \ d\lambda,
\]
where $\lambda$ is the Lebesgue measure on $\Omega$ and $T|^{-n}_{C_s} E$ denotes $(T^{-n}E )\cap C_s$. (See Section 9.2.2 in \cite{schweigerbook}.)

If $\widetilde{T}$ is staggered and the following conditions are also satisfied, then we say that $\widetilde{T}$ is a \emph{staggeringly good} augmented transformation
\begin{itemize}
\item All strings are admissible and all cylinders of $T$ are full---that is, if $s$ has $n$ digits, then $C_s$ is non-empty and, in fact, $T^n C_s = \Omega$, up to some set of $\mu$-measure $0$.
\item The function $f_x$ given by $\widetilde{T}(x;a) = (Tx; f_x(a))$ is the same for all $x$ in a given cylinder $C_s$ with $s=[d_1]$.
\item  The transformation $T$ satisfies Renyi's condition. If $s$ is any admissible string of $n$ digits and $\omega_s$ is defined as above, then there is a absolute constant $\mathcal{C}$, not depending on $s$, such that
\[
\sup_{x\in \Omega} \omega_s(x) \le\mathcal{C} \inf_{x\in\Omega} \omega_s(x).
\]
\end{itemize}
Since $\mu(T|^{-n}_{C_s} E)$ can be written as $\int_E \omega_s(x) \ d\mu(x)$, Renyi's condition implies that for $E\subset \Omega$, we have
\[
\frac{\mu(T|^{-n}_{C_s} E)}{\mu(C_s)} = \frac{\mu(T|^{-n}_{C_s} E)}{\mu(T|^{-n}_{C_s} \Omega)} = \frac{\int_E \omega_s(x) d\mu(x)}{\int_\Omega \omega_s(x) d \mu(x)} \ge \frac{1}{\mathcal{C}}\cdot \frac{\mu(E)}{\mu(\Omega)} .
\]

The importance of staggered systems is the following.

\begin{thm}\label{thm:staggered}
If $\widetilde{T}$ is a staggeringly good augmented transformation on $\widetilde{\Omega}$, then $\widetilde{T}$ is ergodic.  Moreover, if $x$ is $T$-normal then $(x;a)$ is $\widetilde{T}$-normal for any $a\in A$; and conversely.
\end{thm}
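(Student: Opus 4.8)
The plan is to establish ergodicity of $\widetilde{T}$ first and then read off the normality equivalence by essentially the counting argument from Section~\ref{sec:fixedproof}. For ergodicity, I would start from a set $\widetilde{A}\subseteq\widetilde{\Omega}$ with $\widetilde{T}^{-1}\widetilde{A}=\widetilde{A}$ up to a $\widetilde{\mu}$-null set and $\widetilde{\mu}(\widetilde{A})>0$; after the standard replacement of $\widetilde{A}$ by $\bigcap_{k\ge 0}\bigcup_{j\ge k}\widetilde{T}^{-j}\widetilde{A}$ the invariance becomes exact, and since all cylinders of $T$ are full, $\widetilde{T}$ is onto mod $0$, so $\widetilde{T}^k\widetilde{A}=\widetilde{A}$ mod $0$ for every $k\ge 0$. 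Write $\widetilde{A}=\bigsqcup_{a\in A}(A_a\times\{a\})$ with $A_a=\{x\in\Omega:(x;a)\in\widetilde{A}\}$; the invariance unwinds to $x\in A_a\iff Tx\in A_{f_x(a)}$, so the fibers are coupled through the maps $f_x$ — and it is precisely to decouple them that the staggered hypothesis will be used. Since $\widetilde{\mu}(\widetilde{A})>0$, some fiber has $\mu(A_{a_0})>0$. I would fix the staggered string $s_{\operatorname{stag}}=[d_1,\dots,d_n]$ once and for all and set $\delta_0=\min_{0\le i<n}\mu(C_{[d_1,\dots,d_i]})$, which is positive because every cylinder of $T$ is non-empty and $\mu$ is equivalent to Lebesgue measure; $\delta_0$ and the Renyi constant $\mathcal{C}$ are fixed quantities.

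Now fix $\varepsilon>0$. Martingale convergence applied to $\mathbf{1}_{A_{a_0}}$ along the filtration generated by the length-$m$ cylinders of $T$ (which generate $\Sigma$) shows almost every point of $A_{a_0}$ is a density point, so there is an admissible string $s$, say of length $m$, with $\mu(A_{a_0}\cap C_s)\ge(1-\varepsilon)\mu(C_s)$. Pushing $(A_{a_0}\cap C_s)\times\{a_0\}$ forward by $\widetilde{T}^m$ and using fullness together with $\widetilde{T}^m\widetilde{A}=\widetilde{A}$ mod $0$, one obtains $T^m(A_{a_0}\cap C_s)\subseteq A_{b_0}$ mod $0$, where $b_0$ is the augmented value reached after reading $s$ from $a_0$; rephrasing this as $T|^{-m}_{C_s}(\Omega\setminus A_{b_0})\subseteq C_s\setminus(A_{a_0}\cap C_s)$ and invoking Renyi's condition gives $\mu(\Omega\setminus A_{b_0})\le\mathcal{C}\varepsilon$. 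Next, for an arbitrary target $b\in A$ the staggered property supplies an $i$ with $0\le i<n$ and $\widetilde{T}^i(C_{s_{\operatorname{stag}}}\times\{b_0\})\subseteq\Omega\times\{b\}$; pushing $(A_{b_0}\cap C_{[d_1,\dots,d_i]})\times\{b_0\}$ forward by $\widetilde{T}^i$ gives $T^i(A_{b_0}\cap C_{[d_1,\dots,d_i]})\subseteq A_b$ mod $0$, and a second application of Renyi's condition, now using $\mu(C_{[d_1,\dots,d_i]})\ge\delta_0$, yields $\mu(\Omega\setminus A_b)\le\mathcal{C}^2\varepsilon/\delta_0$. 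Since $\varepsilon$ was arbitrary and $\delta_0,\mathcal{C}$ are fixed, $\mu(A_b)=1$ for every $b\in A$, whence $\widetilde{\mu}(\widetilde{A})=1$ and $\widetilde{T}$ is ergodic.

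For the normality statement I would first note that, because $f_x$ is constant on each one-digit cylinder and all cylinders of $T$ are full, every cylinder of $\widetilde{T}$ has the form $\widetilde{C}=C_s\times\{a'\}$ for an admissible $T$-string $s$ and some $a'\in A$, with $\widetilde{\mu}(\widetilde{C})=|A|^{-1}\mu(C_s)$, and all $|A|$ such cylinders occur for each $s$. If $x$ is $T$-normal, then $\#\{0\le k<N:\widetilde{T}^k(x;a)\in\widetilde{C}\}\le\#\{0\le k<N:T^kx\in C_s\}$, so the $\limsup$ of the normalized count is at most $\mu(C_s)=|A|\,\widetilde{\mu}(\widetilde{C})$, and the Pyatetski\u\i-Shapiro criterion — available since $\widetilde{\mu}$ has just been shown ergodic and $\widetilde{T}$-invariant — gives that $(x;a)$ is $\widetilde{T}$-normal. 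Conversely, if $(x;a)$ is $\widetilde{T}$-normal, then for each $T$-cylinder $C_s$ the partition identity $\#\{0\le k<N:T^kx\in C_s\}=\sum_{a'\in A}\#\{0\le k<N:\widetilde{T}^k(x;a)\in C_s\times\{a'\}\}$, divided by $N$ and passed to the limit, gives $\sum_{a'\in A}\widetilde{\mu}(C_s\times\{a'\})=\mu(C_s)$, so $x$ is $T$-normal.

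The step I expect to be the main obstacle is the ergodicity argument: fitting the density-point reduction together with the two applications of Renyi's condition, and especially realizing that the prefixes of the fixed staggered string have measures bounded below by a single positive constant $\delta_0$, which is what makes the final limit $\varepsilon\to 0$ legitimate. Everything after ergodicity — identifying the cylinders of $\widetilde{T}$ and the two short counting inequalities — is routine and mirrors Section~\ref{sec:fixedproof}.
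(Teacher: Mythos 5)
Your argument is correct, and the ergodicity half takes a genuinely different route from the paper. The paper first observes that any invariant set of positive measure projects onto a full-measure subset of $\Omega$ and hence has $\widetilde{\mu}$-measure at least $|A|^{-1}$, then invokes the ergodic decomposition to get at most $|A|$ ergodic components $\mu_j$, uses the staggered string to show each component charges every fiber $\Omega\times\{a\}$, applies Renyi's condition to get $\mu_j(C_s\times\{a\})\ge\epsilon'\mu(C_s)$ uniformly over cylinders, and only then finishes with the martingale-convergence argument to force each component to be full. You bypass the decomposition entirely: you take an arbitrary invariant set, use the martingale/density-point argument up front to find a cylinder on which one fiber $A_{a_0}$ has density $1-\varepsilon$, push forward through that cylinder (fullness plus Renyi) to make the fiber $A_{b_0}$ have measure $\ge 1-\mathcal{C}\varepsilon$, and then use the staggered string (with the uniform lower bound $\delta_0$ on the measures of its prefixes, plus a second application of Renyi) to propagate near-fullness to every fiber $A_b$ before letting $\varepsilon\to0$. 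The two proofs use exactly the same three ingredients --- fullness of cylinders, Renyi's condition, and the staggered string --- but yours is shorter and more self-contained, trading the ergodic decomposition theorem for a direct $\varepsilon$-argument; the one point worth making explicit is that $b_0$ depends on $\varepsilon$ while the bound $\mathcal{C}^2\varepsilon/\delta_0$ does not, since $\delta_0$ is a minimum over the finitely many prefixes of the fixed staggered string. (It is also cleanest to replace $\widetilde{A}$ by $\bigcap_k\bigcup_{j\ge k}\widetilde{T}^{-j}\widetilde{A}$ as you do, so that $T^m(A_{a_0}\cap C_s)\subseteq A_{b_0}$ holds exactly rather than mod $0$ and no forward image of a null set needs to be controlled.) The normality half of your proof is the same as the paper's: the one-sided count plus the Pyatetski\u\i-Shapiro criterion with constant $|A|$ in one direction, and summation over the fiber labels in the other.
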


\begin{proof}
Suppose $E$ is an invariant, measurable subset of $\widetilde{\Omega}$ with non-zero $\tilde{\mu}$-measure. Then, by projecting onto $\Omega$, we see that the set
\[
\left\{x\in \Omega \mid \text{There exists }a \in A\text{, with }(x;a) \in E\right\}
\]
must have full $\mu$-measure, since it is invariant under $T^{-1}$. This implies $\tilde{\mu}(E)\ge |A|^{-1}$.

Now we apply the ergodic decomposition theorem to $\tilde{\mu}$. Since the only possible invariant sets of non-zero $\tilde{\mu}$-measure on $\widetilde{\Omega}$ have size at least $|A|^{-1}$, this means that there are at most $|A|$ ergodic measures for $\widetilde{T}$, say $\mu_1, \mu_2, \dots, \mu_J$, which are absolutely continuous with respect to $\tilde{\mu}$. Each $\mu_j$ corresponds to a set $E_j \subset \widetilde{\Omega}$ such that $E_j$ is invariant under $\widetilde{T}^{-1}$ and
\[
\mu_j (B) = \frac{\tilde{\mu} (B \cap E_j)}{\tilde{\mu}(E_j)}.
\]
The sets $E_1, E_2, \dots, E_J$ are all distinct.  Hence we have 
\[
\tilde{\mu} = \tilde{\mu}(E_1) \mu_1 +  \tilde{\mu}(E_2) \mu_2 + \dots + \tilde{\mu}(E_J) \mu_J.
\]  

 Let $E_{j,a}$ denote the set of $x\in \Omega$ such that $(x;a) \in E_j$. We claim that 
\[
\mu_j(E_{j,a} \times\{a\})  >0
\]
for all $j$ and all $a\in A$. To show this, let $j$ and $a$ be fixed and let $s_{\operatorname{stag}}$ be a staggered string.  Since $E_j$ must project onto a full $\mu$-measure set in $\Omega$ as described earlier, there must exist at least one $a' \in A$ with 
\[
\mu_j(C_{s_{\operatorname{stag}}} \times \{a'\}) = \frac{1}{\tilde{\mu}(E_j)} \tilde{\mu}\left(\left(C_{s_{\operatorname{stag}}} \times \{a'\} \right)\cap E_j\right) > 0.
\]
By the definition of being a staggered string, however, there exists some $i$ such that 
\[
 C_{s_{\operatorname{stag}}} \times \{a'\} \subset T^{-i}\left(\Omega \times \{a\}\right),
\]
which implies that
\begin{align*}
0&< \tilde{\mu}\left(\left(C_{s_{\operatorname{stag}}} \times \{a'\} \right)\cap E_j\right)  \le \tilde{\mu}\left(T^{-i}\left(\Omega \times \{a\} \right)\cap E_j\right) \\
&= \tilde{\mu}\left(T^{-i}\left((\Omega \times \{a\}) \cap E_j\right)\right) = \tilde{\mu}\left((\Omega \times \{a\} )\cap E_j\right)\\
&= \tilde{\mu} \left( E_{j,a} \times \{a\} \right),
\end{align*}
as desired.

However, we can use Renyi's condition to say even more about the value of $\mu_j$ on cylinders. Say $s=[d_1,d_2,\dots,d_n]$, then 
\[
\widetilde{T}^n\left( C_s \times \{ a \} \right)  = \Omega \times \{a'\}
\]
for some $a' \in A$. (Here we implicitly used the first two conditions needed for $\widetilde{T}$ to be staggeringly good.) Since there are only finitely many $\mu_j$ and only finitely many $a'\in A$, there must exist $\epsilon>0$, such that
\[
\mu_j\left( \Omega \times \{a'\} \right) = \tilde{\mu}(E_{j,a'}) \ge \epsilon
\]
for all $j$ and $a'$. But by applying Renyi's condition, we have that there must exist some $\epsilon' >0$ such that for any $s$ and $a$ we have
\begin{align*}
\mu_j(C_s\times\{a\}) &= \mu_j \left( \widetilde{T}^{-n}|_{C_s\times\{a\}} \left( \Omega \times \{a'\} \right)\right) =\frac{1}{\tilde{\mu}(E_j)} \tilde{\mu} \left( \widetilde{T}^{-n}|_{C_s\times\{a\}} \left( \Omega \times \{a'\} \right) \cap E_j\right)\\
&=\frac{1}{\tilde{\mu}(E_j)} \tilde{\mu} \left( \widetilde{T}^{-n}|_{C_s\times\{a\}} \left(\left( \Omega \times \{a'\}\right) \cap E_j\right)\right)\\
&=\frac{1}{\tilde{\mu}(E_j)} \tilde{\mu} \left( \widetilde{T}^{-n}|_{C_s\times\{a\}} \left(E_{j,a'}\times\{a'\}\right)\right) \\
&= \frac{1}{|A| \tilde{\mu}(E_j)} \mu\left( T|^{-n}_{C_s} E_{j,a'} \right) \ge \frac{\mu (E_{j,a'})}{\mathcal{C} |A| \tilde{\mu}(E_j)} \cdot \mu(C_s)\\
&\ge \epsilon' \cdot \mu(C_s)
\end{align*}
again uniformly over all $j$ and $a$. Therefore, we have
\[
\tilde{\mu}\left(E_j \cap (C_s \times \{a\})\right) \ge \epsilon' |A| \cdot \tilde{\mu}(C_s\times \{a\})
\]
By a martingale convergence--type argument (see pages 50--51 in \cite{schweigerbook}), one can show that any invariant set $E_j$ with non-zero $\tilde{\mu}$-measure must have full $\tilde{\mu}$-measure. Thus $\widetilde{T}$ is ergodic.

Now consider a point $x\in \Omega$ that is $T$-normal. Then for every cylinder $C_s$ and every $a\in A$, we have
\begin{align*}
\lim_{N\to \infty} \frac{1}{N}\#\{ 1\le n \le N \mid \widetilde{T}^n (x;a) \in C_s \times A\} &=\lim_{N\to \infty} \frac{1}{N}\#\{ 1\le n \le N \mid T^n x \in C_s \} \\
&= \mu(C_s) .
\end{align*}
Thus, in particular, we have for any $a' \in A$ and any $j\le J$
\begin{align*}
&\limsup_{N\to \infty} \frac{1}{N}\#\{ 1\le n \le N \mid \widetilde{T}^n (x;a) \in C_s \times \{a'\}\}\\
 &\qquad\le \limsup_{N\to \infty} \frac{1}{N}\#\{ 1\le n \le N \mid \widetilde{T}^n (x;a) \in C_s \times A\}\\
&\qquad= \mu(C_s)\\
&\qquad\le \frac{1}{|A|} \tilde{\mu}\left( C_s \times \{ a \} \right).
\end{align*}
Thus by the Pyatetski\u\i-Shapiro normality criterion, the points $(x;a)$ for \emph{all} $a$ are $\widetilde{T}$-normal with respect to $\tilde{\mu}$.

Alternately, if $(x;a)\in \widetilde{\Omega}$ is $\widetilde{T}$-normal, then 
\begin{align*}
\lim_{N\to \infty} \frac{1}{N}\#\{ 1\le n \le N \mid T^n x \in C_s \} &=\lim_{N\to \infty} \frac{1}{N}\#\{ 1\le n \le N \mid \widetilde{T}^n (x;a) \in C_s \times A\}\\
&= \mu (C_s)
\end{align*}
and hence $x$ is $T$-normal.
\end{proof}

It would be interesting to know if the additional characteristics of being staggeringly good (such as Renyi's condition) are necessary to prove a theorem like the one above.

\section{Proof of Theorem \ref{thm:main}}

\subsection{Background on continued fractions}

(The details of the RCF and OCF expansions can be found in Masarotto \cite{Masarotto}.)

Let use consider the digit system for the regular continued fraction.  Thus, we have $\Omega=[0,1)\setminus \mathbb{Q}$,
\[
T_{RCF} x =\begin{cases} \dfrac{1}{x}-\left\lfloor \dfrac{1}{x} \right\rfloor & x\neq 0\\
0 & x=0
\end{cases}
\]
$\mathcal{D}=\mathbb{N}$, $I_n=(\frac{1}{n+1},\frac{1}{n}]$ for $n \in \mathcal{D}$, and
\[
\mu_{RCF}(A)=\frac{1}{\log 2}\int_A \frac{1}{1+x} \ dx.
\]
The digits $\alpha_n$ of a given $x\in \Omega$ are given by
\[
\alpha_n= \left\lfloor \frac{1}{T^{n-1}x} \right\rfloor, \quad \text{for } T^{n-1}x \neq 0.
\]
Since we have assumed all $x\in \Omega$ are irrational, this gives
\[
x=\cfrac{1}{\alpha_1+\cfrac{1}{\alpha_2+\cfrac{1}{\alpha_3+\dots}}}=[\alpha_1,\alpha_2, \alpha_3, \dots].
\]

We also want to consider the digit system for continued fraction with odd partial quotients (OCF).  In this case, we have $\Omega=[0,1)\setminus \mathbb{Q}$,
\[
T_{OCF} x = \begin{cases} 
\frac{1}{x}-\left\lfloor \frac{1}{x} \right\rfloor & \text{if }x\neq 0\text{ and }\lfloor1/T_{RCF}x\rfloor \text{ is odd}\\
1-\frac{1}{x}+\left\lfloor \frac{1}{x} \right\rfloor & \text{if }x\neq 0 \text{ and }\lfloor1/T_{RCF}x\rfloor \text{ is even}\\
0 & x=0
\end{cases}
\]
$\mathcal{D}=\left(\mathbb{N}_{odd}\times \{\pm 1\}\right)\setminus \{(1,-1)\}$, 
\[
I_d =\begin{cases} [1/(a+1),1/a), & \text{if }d=(a,+1)\\
[1/a,1/(a-1)), & \text{if }d=(a,-1),
\end{cases}
\]and
\[
\mu_{OCF}(A) = \frac{1}{3\log G} \int_A \frac{1}{G+x-1} + \frac{1}{G+1-x} \ dx,
\]
where $G=(1+\sqrt{5})/2$.  In this case, we have, for irrational $x\in \Omega$,
\[
x=\cfrac{1}{\alpha_1+\cfrac{\epsilon_1}{\alpha_2+\cfrac{\epsilon_2}{\alpha_3+\dots}}}=[(\alpha_1,\epsilon_1),(\alpha_2,\epsilon_2),(\alpha_3,\epsilon_3),\dots].
\]

\subsection{The RCF-to-OCF algorithm}

Consider for a moment a general continued fraction of the form
\[
\cfrac{1}{\alpha_1+\cfrac{\epsilon_1}{\alpha_2+\cfrac{\epsilon_2}{\alpha_3+\dots}}}=[(\alpha_1,\epsilon_1),(\alpha_2,\epsilon_2),(\alpha_3,\epsilon_3),\dots]
\]
where $\alpha_n+\epsilon_n >1$, $\alpha_n \in \mathbb{N}$ and $\epsilon_n=\pm 1$.  We have two operations we can perform on a given continued fraction, which alter the digits but do not change the value of the resulting continued fraction.  The first, is called insertion and is given by
\begin{align*}
&[\dots,(\alpha_{n-1},\epsilon_{n-1}),(\alpha_n,\epsilon_n),(\alpha_{n+1},\epsilon_{n+1}),\dots]\\
&\qquad=[\dots,(\alpha_{n-1},\epsilon_{n-1}),(\alpha_n+\epsilon_n,-\epsilon_n),(1,1),(\alpha_{n+1}-1,\epsilon_{n+1}),\dots].
\end{align*}
The second is called singularization and is given by
\begin{align*}
&[\dots,(\alpha_{n-1},\epsilon_{n-1}),(\alpha_n,\epsilon_n),(1,1),(\alpha_{n+2},\epsilon_{n+2}),\dots]\\
&\qquad=[\dots,(\alpha_{n-1},\epsilon_{n-1}),(\alpha_n+\epsilon_n,-\epsilon_n),(\alpha_{n+2}+1,\epsilon_{n+2}),\dots].
\end{align*}
In both of these cases, we refer to the process as inserting or singularizing at $\mathfrak{d}_n=(\alpha_n,\epsilon_n)$. We can see from the above that the act of inserting cancels out a singularization at the same digit, and vice-versa.

With these two procedures, we have an algorithm that converts the RCF expansion of $x$ into the OCF expansions of $x$.  We write the RCF expansion of $x$ in the more general setting
\[
x=[\mathfrak{d}_1,\mathfrak{d}_2, \dots] = [(\alpha_1, 1),(\alpha_2,1),\dots].
\]
Let $m=1$.  If $\alpha_m$ is even and $\alpha_{m+1}>1$, we insert at $\mathfrak{d}_m$. If $\alpha_m$ is even and $\alpha_{m+1}=1$, we singularize at $\mathfrak{d}_m$.  Then we increase $m$ by $1$ and repeat ad infinitum.

We adopt the convention of referring to what happens in the RCF-to-OCF algorithm in terms of the original RCF expansion: let us illustrate this now with an example. Consider the simple RCF expansion
\[
[4,3,3,3,\dots] = [(4,1), (3,1), (3,1), (3,1), \dots].
\]
In the RCF-to-OCF algorithm, the first step would be to insert at the first digit, $\frak{d}_1=(4,1)$ and obtain:
\[
[(5,-1), (1,1), (2,1), (3,1), (3,1),\dots].
\]
The next step would be to insert at the now-third digit $(2,1)$. However, this digit naturally arises from the second digit $\frak{d}_2 = (3,1)$ in our original RCF expansion, and therefore we shall refer to this as inserting at $\frak{d}_2$ rather than as inserting at the third digit. We will also say that this is the point when the algorithm reaches $\frak{d}_2$, and that it has been changed to the digit $(2,1)$. Likewise after the next step when we have
\[
[(5,-1), (1,1), (3,-1), (1,1), (2,1), (3,1),\dots]
\]
we will say that the algorithm has reached $\frak{d}_3$ and refer to the next operation as inserting at $\frak{d}_3$, even though the digit $(2,1)$ appears in the fifth place. Given $n$, we let $m(n)$ denote the value of $m$ when the algorithm arrives at $\frak{d}_n$. Note that unless $\frak{d}_n=(1,1)$ and the algorithm singularized at $\frak{d}_{n-1}$, the function $m(n)$ will exist. If this were to happen, we will define $m(n)$ by $m(n-1)$, which must exist.

If a singularization occurs at $\frak{d}_n$, then we say that a deletion occurs at $\frak{d}_{n+1}$.

The RCF-to-OCF algorithm produces a domino-like effect.  It runs along a string of odd $\alpha_m$ without doing anything, until it reaches an even $\alpha_m$.  By inserting or singularizing, we change the parity of $\alpha_m$, making it odd and also changing the parity of the successive $\alpha_{m+1}$ (or, if $\alpha_{m+1}=1$, it deletes that term entirely and instead changes the parity of $\alpha_{m+2}$).  If $\alpha_{m+1}$ was odd, it is now even, so we insert or singularize at it, and thereby alter the parity of the successive term $\alpha_{m+2}$.  This continues until the parity of the successive term was changed from even to odd, thus stopping the domino effect and allowing the algorithm to skip forward over odd $\alpha_m$'s once again.

Thus, all the odd $\alpha_m$'s that occur between the first and second appearances of even $\alpha_m$'s and third and fourth appearances of even $\alpha_m$'s, etc. will be altered (or simply removed) by the algorithm, and all the remaining odd $\alpha_m$'s are left unchanged.  Likewise, the first, third, fifth, etc. even $\alpha_m$'s will always be increased by $1$, while the remaining even $\alpha_m$'s may be increased or decreased depending on what precedes them.  

The challenge of trying to compare RCF-normality with OCF-normality comes from this disjointed nature of this algorithm: whether an odd $\alpha_m$ remains unchanged or gets altered by the algorithm depends on, at least, how many even $\alpha_m$'s precede it.  To circumvent this problem, we use a particular augmentation of the RCF system to keep track of how many even $\alpha_m$'s have passed.

\subsection{Forward knowledge of digits}\label{sec:forward}
To justify the augmented system we will give in the next section, let us pose a slightly different problem. Suppose that we know all the RCF digits of a real number $x$ from the $n$th digit onward, and in addition, we know whether an insertion, singularization, or deletion occured at the $n-1$th digit in the RCF-to-OCF algorithm. From this information, what digits of the OCF expansion of $x$ do we know?

Say $\alpha_n$ is even and no insertion or deletion occured at $\frak{d}_{n-1}$. Then the RCF-to-OCF can start at $\frak{d}_n$ as if it were the first digit. Therefore, in this case, we know all the digits of the OCF expansion of $x$ from the $m(n)$th digit forward.

Say $\alpha_n$ is even and an insertion or deletion occured at $\frak{d}_{n-1}$. Regardless of which occured, when the RCF-to-OCF algorithm reaches $\frak{d}_n$, it will be odd, and thus, no insertion or deletion will occur at $\frak{d}_n$. Although we may not know what the $m(n)$th digit of the OCF expansion of $x$ will be, we will know all the digits from the $m(n)+1$st position onward.

Say $\alpha_n$ is odd and greater than $1$ and no insertion or deletion occured at $\frak{d}_{n-1}$. Then in this case no insertion or deletion will occur at $\frak{d}_n$ and thus we know all the digits of the OCF expansion from the $m(n)$th onward.

Say $\alpha_n$ is odd and greater than $1$ and an insertion or deletion occured at $\frak{d}_{n-1}$. This will change the parity of $\alpha_n$ to even when the algorithm reaches it, and so an insertion or deletion will occur at $\frak{d}_n$ (depending on what the value of $\alpha_{n+1}$ is). Thus, we will know all the digits of the OCF expansion from the $m(n)+1$th digit onward.

If $\alpha_n=1$ and no singularization or deletion occured at $\frak{d}_{n-1}$, then we know the OCF algorithm from the $m(n)$th digit onward.

If, on the other hand, $\alpha_n=1$ and a singularization or deletion occured at $\frak{d}_{n-1}$, then the situation is more delicate. Without knowing whether it was a singularization or a deletion that occured at $\frak{d}_{n-1}$, we cannot immediately say how it will impact the RCF-to-OCF algorithm. Let $n'$ be the smallest positive integer greater than $n$ such that $\alpha_{n'}\neq 1$, then when the algotirhm reaches $n'$, we know that this is a number greater than $1$ with an insertion or deletion occuring at $\frak{d}_{n'-1}$, thus we know what happens from the $m(n')$th or $m(n')+1$ digit onward (depending on whether $\alpha_{n'}$ is even or odd) but not what happens prior to that point.

\subsection{The augmented system}

We consider the following augmentation of the RCF expansion: we let $A=\{1,2\}$ and define $\widetilde{T}$ by
\[
\widetilde{T}(x;a) =\begin{cases}
(T_{RCF}x; a) & \alpha_1(x)\text{ is odd}\\
(T_{RCF}x; 3-a) & \alpha_1(x)\text{ is even}.
\end{cases}
\]
Here $\alpha_1(x)$ refers to the first RCF digit of $x$.  This transformation $\widetilde{T}$ is easily seen to be $\tilde{\mu}_{RCF}$-measure-preserving.  We call this augmented system RCF*.

One can easily show that $\widetilde{T}$ is staggeringly good. The string $s=[2]$ is staggered, and the remaining conditions follow from standard facts about the RCF algorithm. 

The extra information that $\widetilde{T}$ carries is the following: the augmented value of $\mathfrak{d}_n$ in $(x,1)$ is $1$ if in the RCF-to-OCF algorithm no singularization, insertion, or deletion occured at $\frak{d}_{n-1}$, and the augmented value is $2$ otherwise.

\begin{lem}\label{lem:mn}
Suppose that $x$ is RCF-normal. Then there exists a constant $c$ such that $m(n)=cn(1+o(1))$.
\end{lem}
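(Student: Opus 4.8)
The plan is to express $m(n)$ as a counting function over the first $n$ RCF digits and then apply the ergodic theorem to $\widetilde{T}$ on RCF*.  First I would observe that, by the domino description of the RCF-to-OCF algorithm, $m(n)$ counts the number of OCF digits produced by the time the algorithm reaches $\mathfrak{d}_n$, and each step of the algorithm either (a) leaves an RCF digit in place producing one OCF digit, (b) inserts at a digit (replacing one RCF digit by two, but the ``$(1,1)$'' gets absorbed on the next step), or (c) singularizes/deletes (two RCF digits collapse to fewer OCF digits).  Concretely I would argue that there is a simple local rule: $m(n) - m(n-1) \in \{0,1\}$ for all $n$, where the increment is $0$ precisely when a deletion occurs at $\mathfrak{d}_n$ (i.e. when $\mathfrak{d}_n = (1,1)$ following a singularization at $\mathfrak{d}_{n-1}$), and $1$ otherwise.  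Hence
\[
m(n) = n - \#\{\, 2 \le k \le n \mid \text{a deletion occurs at } \mathfrak{d}_k \,\} + O(1),
\]
the $O(1)$ accounting for the convention when $m(n)$ is defined via $m(n-1)$.

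Next I would identify the event ``a deletion occurs at $\mathfrak{d}_k$'' with a cylinder-type condition in the augmented system RCF*.  A deletion at $\mathfrak{d}_k$ happens exactly when a singularization occurs at $\mathfrak{d}_{k-1}$, which by the algorithm means: when the algorithm reaches $\mathfrak{d}_{k-1}$ its partial quotient is even, and $\alpha_k = 1$.  The augmented value encodes precisely whether an insertion/singularization/deletion happened at $\mathfrak{d}_{k-2}$, so ``the value of the partial quotient at $\mathfrak{d}_{k-1}$ when the algorithm arrives'' is determined by $\alpha_{k-1}$ together with the augmented value $a_{k-1}$ of $(x;1)$; combined with the (ordinary, un-augmented) condition $\alpha_k = 1$, the deletion event becomes a finite union of cylinder sets $C \times \{a\}$ for $\widetilde T$.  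Therefore
\[
\#\{\, 2 \le k \le n \mid \text{deletion at } \mathfrak{d}_k \,\} = \sum_{k} \mathbf{1}_{B}\!\left(\widetilde T^{\,k-1}(x;1)\right) + O(1)
\]
for an appropriate measurable $B \subset \widetilde\Omega$ that is a finite union of augmented cylinders.

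Then I would invoke Theorem \ref{thm:staggered}: since $x$ is RCF-normal and RCF* is staggeringly good, $(x;1)$ is $\widetilde T$-normal, so the above Birkhoff average converges to $\tilde\mu_{RCF}(B)$.  Consequently
\[
m(n) = \bigl(1 - \tilde\mu_{RCF}(B)\bigr)\, n \,(1+o(1)),
\]
and setting $c = 1 - \tilde\mu_{RCF}(B)$ gives the claim, provided $c > 0$; positivity follows because deletions cannot occur at two consecutive indices (so $\tilde\mu_{RCF}(B) \le 1/2$), or more simply because $m(n) \to \infty$ (the algorithm always moves forward).  The main obstacle I anticipate is the bookkeeping in the previous paragraph: one must check carefully that ``which operation the algorithm performs when it reaches $\mathfrak{d}_{k-1}$'' really is a function of the bounded amount of data $(\alpha_{k-1}, \alpha_k, a_{k-1})$ — in particular handling the delicate case where a run of $1$'s intervenes, as flagged in Section \ref{sec:forward} — and that the event is genuinely a \emph{finite} union of cylinders so that $B$ is a legitimate set to which normality of $(x;1)$ applies.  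Everything else is routine summation and the $o(1)$ is uniform because there is only one string $s$ (namely the one defining $B$) to control.
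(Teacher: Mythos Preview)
There are two genuine problems.

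\textbf{The increment formula is wrong.}  You claim $m(n)-m(n-1)\in\{0,1\}$, with increment $0$ exactly at deletions.  But an insertion at $\mathfrak{d}_{n-1}$ introduces a new digit $(1,1)$ between $\mathfrak{d}_{n-1}$ and $\mathfrak{d}_n$; since this $(1,1)$ has odd $\alpha$, the algorithm simply passes over it (it is \emph{not} ``absorbed on the next step''), and the algorithm reaches $\mathfrak{d}_n$ only at step $m(n-1)+2$.  So the increment can be $2$.  The correct accounting is
\[
m(n)=n+\#\{\text{insertions before }\mathfrak{d}_n\}-\#\{\text{singularizations before }\mathfrak{d}_n\}+O(1),
\]
and both counts must be shown to be $c_i n(1+o(1))$ separately.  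This is exactly how the paper proceeds.

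\textbf{The event is not a finite union of RCF* cylinders.}  You anticipated this obstacle, and it is real, not merely bookkeeping.  When $\alpha_{k-1}=1$ and $a_{k-1}=2$, the augmented value only tells you that \emph{something} happened at $\mathfrak{d}_{k-2}$, not whether it was a singularization (in which case $\mathfrak{d}_{k-1}$ is deleted and no singularization occurs there) or a deletion (in which case $\mathfrak{d}_{k-1}$ becomes $2$, even, and a singularization may occur).  Distinguishing these requires looking back through the entire run of $1$'s, so the relevant event is a \emph{countable} union of RCF* cylinders, and normality of $(x;1)$ does not apply to it directly.  The paper resolves this by writing the event as a union $\mathcal{D}$ of explicit cylinders of the form $([2a,1^j,c+1];1)$ and $([2a+1,1^j,c+1];2)$, truncating to $\mathcal{D}_k$ (strings of length $\le k$), and bounding the tail by the cylinder $\mathcal{I}_k=[1^{k'}]$, whose measure tends to $0$.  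For singularizations there is the additional wrinkle that a single cylinder in the list with $j\approx 2b$ accounts for $b$ singularizations at once, so the count is a weighted sum over cylinders; the paper handles this with the same truncate-and-let-$k\to\infty$ scheme.  Your proposal, as written, assumes a finite union and a single string defining $B$, which does not hold.
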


\begin{proof}
We know that $m(n)$ equals $n$ minus the number of singularizations that occured before the $n$th digit, plus the number of insertions that have occured before the $n$th digit, plus $O(1)$. Thus it suffices to show that the number of singularizations that occured before the $n$th digit is $c_1 n(1+o(1))$ and that the number of insertions is $c_2n(1+o(1))$.

Let's start with insertion first. Each insertion occurs to a unique occurence of $\widetilde{T}^i (x;1)$ in a cylinder set $C_s\times \{a\}$ corresponding to one of the following strings
\[
([2a,1^{2(b-1)},c+1],1) \qquad \text{ or } \qquad ([2a+1, 1^{2(b-1)},c+1],2)
\]
where $a,b,c\in \mathbb{N}$. (Here we use the notation $[1^j]$ to denote the string composed of $j$ copies of $1$.) As a simple example, if we encounter the strings $([2,1,1,5],1)$, then we would singularize at the $2$, then insert at the second $1$.

Let $\mathcal{D}$ denote the union over all these cylinders, $\mathcal{D}_k$ denote the union over all of these cylinder sets with $|s|\le k$, and $\mathcal{I}_k = [1^{2k}]$. Then we have that the number of insertions  up to the $n$th place is greater than
\[
\#\{i \le n :\widetilde{T}^i (x;1) \in \mathcal{D}_k\} +O(k)
\]
and less than
\[
\#\{i \le n :\widetilde{T}^i (x;1) \in \mathcal{D}_k\cup \mathcal{I}_k\} +O(k).
\]
Thus, since the RCF-normality of $x$ implies the RCF*-normality of $(x;1)$ by Theorem \ref{thm:staggered}, we have that the number of insertions up to the $n$th place is
\[
n(\tilde{\mu}(\mathcal{D}_k)+O(\tilde{\mu}(\mathcal{I}_k))+o(1)).
\]
By letting $k$ tend to infinity and noting that the measure of $\mathcal{D}\setminus \mathcal{D}_k$ and $\mathcal{I}_k$ tend to $0$ with $k$, we get that the number of insertions is $\tilde{\mu}(\mathcal{D})n(1+o(1))$, as desired.

The proof is similar for singularization, with a few key differences. In this case, each occurence of $\widetilde{T}^i (x;1)$ in a cylinder set $C_s\times \{a\}$ of one of the following strings
\begin{align*}
&([2a,1^{2b-1},c+1],1),\quad ([2a,1^{2b},c+1],1),\\
& \quad ([2a+1, 1^{2b-1},c+1],2)\quad  \text{ or }\quad  ([2a+1, 1^{2b},c+1],2)
\end{align*}
where $a,b,c\in \mathbb{N}$ now corresponds to $b$ total singularizations. So let $\mathcal{E}_k$ denote the union over all these cylinder sets with $|s|=2k$ or $|s|=2k+1$ (i.e., these are the strings with $b=k$), and let $\mathcal{I}_k=[1^{2k}]$. Then by a similar argument to the above, for any fixed $K$, the number of singularizations up to the $n$th place is
\[
n\left( \sum_{k=1}^K k \cdot \tilde{\mu}(\mathcal{E}_k)+O\left( \tilde{\mu}(\mathcal{I}_K)\right) + o(1)\right).
\] 
By noting that the measure of $\mathcal{I}_K$ goes to $0$ as $K$ goes to infinity, the sum in the above equation converges as $K$ goes to infinity. Therefore, we can find a constant $c_1$ so that the number of singularizations up to the $n$th place is $c_1 n(1+o(1))$ as desired.
\end{proof}

Now we want to consider a point $x$ that is RCF-normal. By Lemma \ref{lem:subset}, to prove that all such $x$ are OCF-normal, it suffices to show that for all finite-length OCF strings $s_O$, we have that the limit
\[
\lim_{N\to \infty} \frac{1}{N} \#\{0\le n < N \mid T_{OCF}^{n}x \in C_{s_O}\}
\]
exists and is independent of which RCF-normal point $x$ we used.

Given a point $x$, let $m^{-1}(N)$ denote the smallest positve integer $n$ such that $m(n) \ge N$.

Consider a string $s_O$ of OCF digits, and let $(s;a)$, consisting of a finite RCF string $s$ together with an augmented value $a\in \{1,2\}$, be called \emph{a trigger string} for $s_O$ if the following hold:
\begin{itemize}
\item If $\widetilde{T}^i(x;1)\in C_s \times \{a\}$ for some non-negative integer $i$, then there exists an $j$ such that $T_{OCF}^j x \in C_{s_O}$.
\item If $s'$ is a substring of $s$ with corresponding augmented value $a'$---i.e., if there exists an $n$ such that 
\[
\widetilde{T}^n \left( C_s \times \{a\} \right)\subset C_{s'} \times \{a'\}
\]
---then $(s';a)$ does not satisfy the previous condition.
\end{itemize}

We can actually give $j$ fairly explicitly in terms of $i$, since the minimality criterion of the second condition tells us that the $i+|s|-1$th digit of the RCF expansion of $x$ should roughly coincide with the $j+|s_O|-1$th digit of the OCF expansion of $x$. If the last digit of $s$ is a $1$ that would be deleted, then $m(i+|s|-2)=j+|s_O|-1$. If the last digit of $s_O$ is a $(1,1)$ that was caused by an insertion at the next-to-last digit of $s$, then $m(i+|s|-2)+1=j+|s_O|-1$.  In all other cases, we have that $m(i+|s|-1)=j+|s_O|-1$.  From this we can see that each occurence of the trigger strings forms a bijection with each appearance of $s_O$ in the OCF expansion.

Let $k$ be a positive integer. Any trigger string of $s_O$ of length at least $2|s_O|+k+4$ will be of the form
\[
([2a,1^k,*];1) \qquad \text{or} \qquad ([2a+1,1^k,*];2)
\]
where $*$ represents some string of digits. This follows from the previous paragraph combined with Section \ref{sec:forward}. (The $2|s_O|$ accounts for the possibility that as many singularizations occur as possible.) Suppose that a trigger string of length $2|s_O|+k+4$ started with $([2a,1^b,c+1,*];1)$ or $([2a+1,1^b,c+1,*];2)$ for $a,b,c\in \mathbb{N}$ and $b<k$: then by Section \ref{sec:forward}, the corresponding string starting at $c+1$ (which is the $i+b+1$th digit) will tell us some of the digits of the OCF expansion starting from at least the $m(i+b+1)+1$th digit onward. But since, by construction, $m(i+b+1)+1<j$, we have that the corresponding string starting from $c+1$ is also a trigger string, thus contradicting the second condition of being a trigger string. A similar argument holds for strings of the form $([1,*];*)$.

Let $\mathcal{D}_k(s_O)$ denote the union of all cylinder strings of $s_O$ with length at most $k$. Let $\mathcal{I}_k=[1^{k'}]$ where $k'=2|s_O|-2-k$. Then note that we have
\begin{align*}
&\#\{0\le n \le m^{-1}(N)\mid \widetilde{T}^n (x;1)\in \mathcal{D}_k(s_O)\}+O(k)\\ 
&\qquad\le \#\{0 \le n \le N \mid T_{OCF}^n x \in C_{s_O}\} \\
&\qquad \le \#\{0\le n \le m^{-1}(N)\mid \widetilde{T}^n (x;1)\in \mathcal{D}_k(s_O)\cup \mathcal{I}_k\}+O(k).
\end{align*}
However, by the $\widetilde{T}$-normality of $(x;1)$, we have
\begin{align*}
\tilde{\mu}(\mathcal{D}_k(s_O))m^{-1}(N)(1+o(1))&\le \#\{0 \le n \le N \mid T_{OCF}^n x \in C_{s_O}\}  \\&\le \tilde{\mu}(\mathcal{D}_k(s_O)\cup \mathcal{I}_k)m^{-1}(N)(1+o(1)).
\end{align*}
But as $k$ tends to infinity, the measure of $\mathcal{I}_k$ tends to $0$, and so there exists a constant $c_3$, dependent only on $s_O$, so that
\[
 \#\{0 \le n \le N \mid T_{OCF}^n x \in C_{s_O}\}  = c_3 m^{-1}(N)(1+o(1)).
\]
But by Lemma \ref{lem:mn}, we have that $m^{-1}(N) =c_4 N (1+o(1))$. Thus
\[
\lim_{N\to \infty} \frac{ \#\{0 \le n \le N \mid T_{OCF}^n x \in C_{s_O}\}  }{N} = c(s_O)(1+o(1))
\]
for some constant $c(s_O)$ depending only on $s_O$. This completes the proof in this direction.

\subsection{The reverse direction}

The proof that all OCF-normal numbers are also RCF-normal follows by a similar method. In fact, it is even easier and we can skip RCF* completely. This is because the OCF-to-RCF algorithm is simply to singularize or insert at any digit $\mathfrak{d}_n$ with $\epsilon_n=-1$. Therefore, given an RCF-string $s_R$, it is very easy to find the corresponding OCF trigger strings and apply the same methods as above. We omit the details as they are more tedious than insightful.

\section{RCF normality and ECF normality}

We shall only outline how RCF-normality implies ECF-normality, remarking in how the proof differs from that of the proof that RCF-normality implies OCF-normality.

One major difference is that the invariant measure for the ECF expansion, $\mu_{ECF}$,  is not finite; therefore, for a point $x\in [0,1)$ to be ECF-normal, we mean that given two strings $s$ and $s'$ with $\mu_{ECF}$-finite cylinder sets $C_s$ and $C_{s'}$, we have
\begin{equation}\label{eq:ECFnormal}
\lim_{N\to \infty} \frac{\#\{0 \le n < N \mid T_{ECF}^n x \in C_s\}}{\#\{0 \le n < N \mid T_{ECF}^n x \in C_{s'}\}} = \frac{\mu_{ECF}(C_s)}{\mu_{ECF}(C_{s'})}.
\end{equation}
A variant of Lemma \ref{lem:subset} holds with this new notion of normality. Any subset $X$ of $[0,1)$ of positive lebesgue measure for which the limit on the left in \eqref{eq:ECFnormal} exists and is independent of $x\in X$ is a subset of the set of ECF-normal numbers.

As with the OCF case, there is an RCF-to-ECF algorithm and we provide it in lieu of detailing $T_{ECF}$ and $\mu_{ECF}$. As before, consider a general RCF-expansion of a number $x$
\[
x=[(\alpha_1,\epsilon_1),(\alpha_2,\epsilon_2),\dots]
\]
and let $m =1$.  Suppose $\alpha_m$ is odd. If, in addition, $\alpha_{m+1}=1$, then we singularize at $\alpha_m$; otherwise, we insert at $\alpha_m$.  We then increase $m$ by $1$ and repeat.

This algorithm can be sped up significantly.  The process of insertion introduces a digit $(1,1)$, which, since the corresponding $\alpha_m$ is odd, must be the target of the next insertion or singularization.  Thus, if we are currently considering $\alpha_m$ odd, and $\alpha_{m+1}>1$, then we can replace
\[
[\dots,(\alpha_m,1),(\alpha_{m+1},1),(\alpha_{m+2},1),\dots]
\]
with
\[
[\dots,(\alpha_m+1,-1),(2,-1)^{\alpha_{m+1}-1},(\alpha_{m+2}+1,1),\dots].
\]

As with the RCF to OCF algorithm, the RCF to ECF algorithm produces a domino like effect.  It skips over long periods of even $\alpha_m$'s until it reaches an odd $\alpha_m$.  Regardless of the value of $\alpha_{m+1}$, it is wholly altered, either into nothing (if $\alpha_{m+1}=1$) or into a long string of $(2,-1)$'s (otherwise); then the parity of $\alpha_{m+2}$ is switched from even to odd or vice-versa.  Thus, after first seeing an odd $\alpha_m$, it will continue fundamentally altering the $\alpha_{m+1+2i}$'s and shifting the parity of the $\alpha_{m+2i}$'s until it reaches $\alpha_{m+2i}$ that started odd (and would thus be shifted to even by the insertion or deletion at $\alpha_{m+2i-1}$).

We use the following augmentation of the natural extension of the RCF system: we let $A=\{1,2,3\}$ and define $\widetilde{T}$ by
\[
\widetilde{T}(x;a) =\begin{cases}
(T_{RCF}x; 1) & \text{if }a=1\text{ and }\alpha_1(x)\text{ is even}\\
(T_{RCF}x; 3) & \text{if }a=1\text{ and }\alpha_1(x)\text{ is odd}\\
(T_{RCF}x; 2) & \text{if }a=3\\
(T_{RCF}x; 3) & \text{if }a=2\text{ and }\alpha_1(x)\text{ is even}\\
(T_{RCF}x; 1) & \text{if }a=2\text{ and }\alpha_1(x)\text{ is odd}
\end{cases}
\]
Again, this is easily seen to be staggeringly good, with $[3,3]$ as a staggered string.  Those $\mathfrak{d}_n$ with augmented value $1$ are left unchanged by the algorithm up to the point where $m= n$; those $\mathfrak{d}_n$ with augmented value $2$ are altered by a prior insertion or singularization when at the point where $m= n$; and those $\mathfrak{d}_n$ with augmented value $3$ are either removed completely or are replaced by a string of $(2,-1)$'s.

If we let $m(n)$ have an analogous meaning here as it did in the previous section, then we get a much stronger result than Section \ref{sec:forward}. In fact, $\widetilde{T}^n(x;1)$ always contains enough information to know all the ECF digits of $x$ from the $m(n)+1$th place onward.  So given an ECF string $s_E$, all the trigger strings for $s_E$ will have finite length (never worse than $3|s_E|$). 

If we let $\mathcal{D}(s_E)$ be the corresponding union over all trigger strings of $s_E$ then we have
\begin{align*}
 \frac{\#\{0 \le n < N \mid T_{ECF}^n x \in C_s\}}{\#\{0 \le n < N \mid T_{ECF}^n x \in C_{s'}\}}&=\frac{\#\{0 \le n < m^{-1}(N) \mid  \widetilde{T}^n(x;1) \in\mathcal{D}(s)\}+O(|s|)}{\#\{0 \le n < m^{-1}(N) \mid \widetilde{T}^n(x;1) \in \mathcal{D}(s')\}+O(|s'|)}\\
&= \frac{\tilde{\mu}(\mathcal{D}(s))m^{-1}(N)(1+o(1))}{\tilde{\mu}(\mathcal{D}(s'))m^{-1}(N)(1+o(1))}\\
&= \frac{\tilde{\mu}(\mathcal{D}(s))(1+o(1))}{\tilde{\mu}(\mathcal{D}(s'))(1+o(1))}
\end{align*}
as desired. (We note that $m^{-1}(N)$ is not in any way asymptotic to $N$ in this case, but this is not needed with this different notion of normality.)

\end{document}